\definecolor{orange}{rgb}{1,0.5,0}
\newcommand{\bN}{\ensuremath{\mathbb{N}}}
\newcommand{\bR}{\ensuremath{\mathbb{R}}}
\newcommand{\ind}{\ensuremath{\mathbbm{1}}}
\newcommand{\cE}{\ensuremath{\mathcal{E}}}
\newcommand{\cF}{\ensuremath{\mathcal{F}}}
\newcommand{\cG}{\ensuremath{\mathcal{G}}}
\newcommand{\abs}[1]{\left\vert \, #1 \, \right\vert}
\newcommand{\norm}[1]{\left\Vert \, #1 \, \right\Vert}
\newcommand{\ddx}[1][1]{\ifnum#1=1 \frac{d}{dx} \else \frac{d^{#1}}{dx^{#1}} \fi}
\newcommand{\ddy}[1][1]{\ifnum#1=1 \frac{d}{dy} \else \frac{d^{#1}}{dy^{#1}} \fi}
\newcommand{\ddt}[1][1]{\ifnum#1=1 \frac{d}{dt} \else \frac{d^{#1}}{dt^{#1}} \fi}
\newtheorem{theorem}{Theorem}[section]
\newtheorem{lemma}[theorem]{Lemma}
\newcommand{\hmu}{\widehat{\mu}}
\newcommand{\omu}{\overline{\mu}}
\newcommand{\hnu}{\widehat{\nu}}
\DeclareMathOperator*{\essinf}{ess\,inf}
\DeclareMathOperator*{\esssup}{ess\,sup}
\newcommand{\TV}{\mathrm{TV}}
\begin{document}

\title{On maximal agreement couplings}

\author{Florian V\"ollering\footnote{Institut f\"ur Mathematische Statistik,
Westf\"alische Wilhelms-Universit\"at M\"unster,
Einsteinstra\ss{}e 62}}

\maketitle

\begin{abstract}
We call a coupling of two stochastic processes which maximizes the time until the first disagreement a maximal agreement coupling. We show that such a coupling always exists. Furthermore, it is possible to construct a lower bound on the disagreement time which is independent of one of the two processes.
\end{abstract}

\section{Introduction and Results}
Let $(E,\cE)$ be a Polish space equipped with the Borel $\sigma$-algebra.
Let $(Z^1_t)_{t\in\bN}, (Z^2_t)_{t\in\bN}$ be two $E$-valued stochastic processes on the canonical path space $(E^\bN,\cE^\bN)$ with laws $\mu^1$,$\mu^2$. We simply write $Z=(Z_t)_{t\in\bN}$ for a generic element of $E^\bN$.

A coupling of the measures $\mu^1$ and $\mu^2$ is a measure $\hmu$ on the product space $E^\bN\times E^\bN$ where the marginals are given by $\mu^1$ and $\mu^2$.

For a sub-$\sigma$-algebra $\cF\subset \cE^\bN$, denote the total variation distance with respect to $\cF$ by
\begin{align}
\norm{\mu^1-\mu^2}_{\cF-\TV} := \sup_{A\in \cF}(\mu^1(A)-\mu^2(A)).
\end{align}

A classical question is how quickly $Z^1$ and $Z^2$ can be coupled, that is finding a coupling under which the last time $Z^1$ and $Z^2$ disagree  is as small as possible.
More formally, let
\[ \sigma_0:=\inf\{t\geq 0: Z^1_s=Z^2_s \;\forall s\geq t\} \]
and $\cG_t:=\sigma(Z_s : s\geq t)$.
For any possible coupling $\hmu$ the coupling inequality
\begin{align}\label{eq:coupling-inequality}
\hmu(\sigma_0 \geq t) \geq \norm{\mu^1-\mu^2}_{\cG_t-\TV},
\end{align}
provides a universal lower bound. A \emph{maximal} coupling is a coupling for which \eqref{eq:coupling-inequality} is an equality for all $t\in\bN$,
and it is well-known that such a coupling always exists \cite{GRIFFEATH:75,GOLDSTEIN:79, THORISSON:86}.

We are interested in the opposite question, namely we want to find a coupling so that the \emph{first disagreement time} or \emph{decoupling time}
\begin{align}
\sigma:=\inf\{t\geq 0: Z^1_t\neq Z^2_t\}
\end{align}
is as big as possible. There is a corresponding coupling inequality for this question as well. Let $\cF_t:=\sigma(Z_s: 0\leq s\leq t)$.
\begin{lemma}\label{lemma:coupling-inequality}
For any coupling $\hmu$ of $\mu^1$ and $\mu^2$,
\begin{align}\label{eq:coupling-inequality2}
\hmu(\sigma > t) \leq 1 - \norm{\mu^1-\mu^2}_{\cF_t-\TV}\qquad\forall\;t\in\bN.
\end{align}
\end{lemma}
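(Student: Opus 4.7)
The plan is to exploit the trivial observation that on the event $\{\sigma > t\}$ the two sample paths coincide up to time $t$, so they cannot be distinguished by any $\cF_t$-measurable set; consequently, whatever total variation distance exists on $\cF_t$ must be carried entirely by the complementary event $\{\sigma \leq t\}$ and is therefore bounded by its mass.

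I would begin by rewriting $\{\sigma > t\} = \{Z^1_s = Z^2_s \text{ for all } 0 \leq s \leq t\}$ straight from the definition. For any $A \in \cF_t$, the indicator $\ind_A$ depends only on the coordinates $Z_0,\dots,Z_t$, so on $\{\sigma > t\}$ we have $\ind_A(Z^1) = \ind_A(Z^2)$. Using that $\hmu$ has marginals $\mu^1$ and $\mu^2$, this gives
\begin{align}
\mu^1(A) - \mu^2(A) = \hE\bigl[\ind_A(Z^1) - \ind_A(Z^2)\bigr] = \hE\bigl[(\ind_A(Z^1) - \ind_A(Z^2))\,\ind_{\{\sigma \leq t\}}\bigr],
\end{align}
because the integrand vanishes on $\{\sigma > t\}$. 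Since the integrand is bounded by $\ind_{\{\sigma \leq t\}}$ in absolute value, the right-hand side is at most $\hmu(\sigma \leq t) = 1 - \hmu(\sigma > t)$.

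Taking the supremum over $A \in \cF_t$ yields the claimed inequality \eqref{eq:coupling-inequality2}. There is really no obstacle here; the only point worth checking is that the one-sided supremum in the definition of $\norm{\cdot}_{\cF_t-\TV}$ is the right object to compare to, which is immediate since the bound $\mu^1(A)-\mu^2(A) \leq 1-\hmu(\sigma>t)$ has been established uniformly in $A \in \cF_t$.
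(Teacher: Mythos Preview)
Your proof is correct and is essentially the same argument as the paper's: both exploit that on $\{\sigma>t\}$ the two paths agree on every $\cF_t$-measurable set, so the $\cF_t$-total variation must be carried by $\{\sigma\leq t\}$. The paper packages this via the minimum measure $\nu_t=\mu^1|_{\cF_t}\wedge\mu^2|_{\cF_t}$, showing $\hmu(Z^i\in\cdot,\sigma>t)\leq\nu_t$ and using $|\nu_t|=1-\norm{\mu^1-\mu^2}_{\cF_t-\TV}$, whereas you bound $\mu^1(A)-\mu^2(A)$ directly for each $A$; these are just two presentations of the same idea.
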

We call a coupling for which \eqref{eq:coupling-inequality2} is sharp for all $t\in\bN$ a \emph{maximal agreement coupling}. 
\begin{theorem}\label{thm:maximal-agreement-coupling}
There exists a maximal agreement coupling of $\mu^1$ and $\mu^2$. 
\end{theorem}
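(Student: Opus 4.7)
My plan is to combine weak compactness of the space of couplings with an explicit finite-horizon construction. Write $\Pi := \Pi(\mu^1, \mu^2)$; since $E$ is Polish the marginals $\mu^1, \mu^2$ are tight, so $\Pi$ is tight and weakly compact by Prokhorov. For each $t \in \bN$ the event $\{\sigma > t\} = \{Z^1_{[0,t]} = Z^2_{[0,t]}\}$ is the preimage of the closed diagonal of $E^{t+1}$ under a continuous projection, hence closed, and $\hmu \mapsto \hmu(\sigma > t)$ is upper semi-continuous by the Portmanteau theorem. Setting $a_t := 1 - \norm{\mu^1 - \mu^2}_{\cF_t-\TV}$, the level sets $A_t := \{\hmu \in \Pi : \hmu(\sigma > t) = a_t\}$ are therefore closed (they equal $\{\hmu(\sigma>t) \geq a_t\}$ by Lemma~\ref{lemma:coupling-inequality}). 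A maximal agreement coupling is precisely an element of $\bigcap_{t \in \bN} A_t$, and by compactness of $\Pi$ this intersection is non-empty as soon as the family $(A_t)$ has the finite-intersection property; it therefore suffices to construct, for every $T \in \bN$, a coupling $\hmu^{(T)}$ lying simultaneously in $A_0, \ldots, A_T$.

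For this finite-horizon construction, I would write $\nu_t := \mu^1_t \wedge \mu^2_t$ (a measure on $E^{t+1}$ with $\norm{\nu_t} = a_t$) and let $\pi$ denote the projection dropping the last coordinate. The pointwise inequality $\sum \min \leq \min \sum$ gives $\pi_*\nu_{t+1} \leq \nu_t$, so $\alpha_t := \nu_{t-1} - \pi_*\nu_t$ is a non-negative measure on $E^t$ of mass $a_{t-1} - a_t$, and a telescoping sum yields
\[
\nu_t = \pi_*^{T-t}\nu_T + \sum_{s=t+1}^{T} \pi_*^{s-t-1}\alpha_s \qquad \text{on } E^{t+1}.
\]
I then decompose $\hmu^{(T)}$ as a sum of mutually singular components indexed by $\sigma \in \{0, 1, \ldots, T, T+1\}$, with $\sigma = T+1$ signifying agreement throughout $[0,T]$: the $\sigma = T+1$ piece lives on the diagonal of $E^{T+1}$ with distribution $\nu_T$, and for $t \leq T$ the $\sigma = t$ piece is supported on $\{z^1_{[0,t-1]} = z^2_{[0,t-1]},\, z^1_t \neq z^2_t\}$ with common initial segment distributed as $\alpha_t$. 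The telescoping identity immediately yields $\hmu^{(T)}(Z^1_{[0,t]} = Z^2_{[0,t]} \in A) = \nu_t(A)$ for every $t \leq T$ and $A \in \cE^{t+1}$, so in particular $\hmu^{(T)}(\sigma > t) = a_t$ for every such $t$.

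It remains to fix the joint branching law of $(Z^1_t, Z^2_t)$ on each $\{\sigma = t\}$ and the tails of $Z^i$ beyond time $t$ so that the $Z^i$-marginal of $\hmu^{(T)}$ is $\mu^i_T$. Projecting the marginal identity to $\cF_t$ forces the cumulative contribution from $\{\sigma \leq t\}$ to equal $\mu^i_t - \nu_t$. A valid choice, constructed inductively in $t$, is to use the same transition kernel $P^i_t(x; dy) := (\mu^i_t - \nu_t)(x, dy)/(\mu^i_{t-1} - \pi_*\nu_t)(x)$ at level $t$ on every component with $\sigma \leq t$; the identity $\pi_*(\mu^i_t - \nu_t) = \mu^i_{t-1} - \pi_*\nu_t$ makes $P^i_t$ a bona fide probability kernel, and since $\mu^i_t - \nu_t$ lives where $\mu^i_t > \mu^{3-i}_t$, the two measures $P^1_t(x;\cdot)$ and $P^2_t(x;\cdot)$ are mutually singular, so any coupling of them is automatically off-diagonal and furnishes an admissible branching law.

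The main obstacle lies precisely in the tail construction: the naive choice of drawing the post-decoupling tail on $\{\sigma = s\}$ from the $\mu^i$-conditional given the observed past $Z^i_{[0,s]}$ fails in general, because $\nu_{s-1}\cdot \mu^i(dz_s \mid \cdot) - \nu_s$ need not be a non-negative measure; the remedy is to use the common kernel $P^i_t$ uniformly across all earlier decoupling times (equivalently, on $\{\sigma \leq t\}$ the conditional law of $Z^i_t$ given $Z^i_{[0,t-1]}$ is independent of the precise value of $\sigma$). Once this point is identified the recursion is mechanical, and the finite-intersection step together with compactness of $\Pi$ delivers an element of $\bigcap_t A_t$---a maximal agreement coupling.
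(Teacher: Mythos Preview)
Your argument is correct but follows a genuinely different route from the paper. The paper gives a direct infinite-horizon construction: it recursively defines $\pi_t = \overline\mu^1_t|_{\cF_t} \wedge \overline\mu^2_t|_{\cF_t}$ and $\overline\mu^i_{t+1} = \int \overline\mu^i_t(\cdot\mid z)\,\pi_t(dz)$, peels off the layer $\mu^i_t := \overline\mu^i_t - \overline\mu^i_{t+1}$ on which $\sigma=t$, couples each layer by a product $\mu^1_t(\cdot\mid z)\otimes\mu^2_t(\cdot\mid z)$, and sums over $t\in\bN\cup\{\infty\}$; no compactness is used. You instead reduce existence to the finite-intersection property via Prokhorov and upper semi-continuity of $\hmu\mapsto\hmu(\sigma>t)$ on closed sets, then build for each $T$ a coupling that is maximal only through time $T$, obtaining the limit non-constructively. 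Even the finite-horizon pieces differ in detail: you push the post-branching evolution through a single kernel $P^i_t$ drawn from the global residual $\mu^i|_{\cF_t}-\nu_t$ and applied uniformly across all earlier decoupling times, whereas the paper uses the conditional law of the recursively defined $\mu^i_t$. The paper's explicit structure is exactly what feeds the proofs of Theorems~\ref{thm:maximal-agreement-marginals} and~\ref{thm:strong-agreement-coupling}; your softer argument, while fully adequate for the present statement, would not directly support those. One small omission: your $\hmu^{(T)}$ is so far only a coupling of $\mu^1|_{\cF_T}$ and $\mu^2|_{\cF_T}$ on $E^{T+1}\times E^{T+1}$; to place it in $\Pi$ you still need to attach tails beyond time $T$ via the regular conditional laws $\mu^i(\cdot\mid Z_{[0,T]})$, a routine step that deserves a sentence.
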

In general such a coupling is not unique, since there are no conditions on the joint distribution of $Z^1$ and $Z^2$ after the decoupling time $\sigma$. In fact, any coupling of the marginals after the decoupling time can be used to construct a maximal agreement coupling. Of course for this to be of use we need to describe the marginals first.

To this end we use the language of regular conditional probabilities. Fix $t\in\bN$, $i\in\{1,2\}$. Since $E$ is a Polish space regular conditional probabilities of $Z^i$ given the first $t+1$ steps exist. For $z\in E^{t+1}$ we we write $\mu^i(\cdot | Z=z)$ or $\mu^i(\cdot | z)$ for the regular conditional law of $Z^i$ given $Z^i_{0,...,t}=z$. We adopt similar notation for the regular conditional probabilities of other probability measures, in particular for couplings.
\begin{theorem}\label{thm:maximal-agreement-marginals}
Let $\hmu$ be a maximal agreement coupling.
\begin{enumerate}
\item
For $t\in\bN$, $s\geq t$, $z\in E^{s}$, $i=1,2$, the marginals after the decoupling time are given by
\[  \hmu(Z^i\in\cdot\;|\; Z^i=z, \sigma\geq t) = \mu^i(\;\cdot\;|\;z) \qquad \hmu-\text{a.s.} \]
\item 
For $t\in\bN$ and $z,z'\in E^{t+1}$ with $z_{0,...,t-1}=z'_{0,...,t-1}$ let $\hmu^{z,z'}_t$ be a coupling of $\mu^1(\cdot|z)$ and $\mu^2(\cdot|z')$. Assume that the map $(Z^1,Z^2,\sigma)\mapsto \hmu^{Z^1_{0,...,\sigma}, Z^2_{0,...,\sigma}}_\sigma$ is measurable. Then
\[ \hmu':= \int \hmu^{Z^1_{0,...,\sigma}, Z^2_{0,...,\sigma}}_\sigma\;d\hmu \]
is a maximal agreement coupling.
\end{enumerate}
\end{theorem}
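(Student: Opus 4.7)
The plan is to prove part (a) first, since part (b) is built on top of it. For part (a), the starting point is the equality case of the coupling inequality: $\hmu(\sigma \geq t+1) = 1 - \norm{\mu^1 - \mu^2}_{\cF_t-\TV}$ for every $t \in \bN$. The classical characterisation of maximal couplings of $\mu^1|_{\cF_t}$ and $\mu^2|_{\cF_t}$ forces the projection of $\hmu$, restricted to the diagonal event $\{\sigma \geq t+1\} = \{Z^1_{0,\ldots,t} = Z^2_{0,\ldots,t}\}$, onto the common value to coincide with the sub-probability measure $(\mu^1 \wedge \mu^2)|_{\cF_t}$. My first step is to make this explicit as an identity. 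Combining it with the marginal constraint $\hmu \circ \pi_i^{-1} = \mu^i$ then disintegrates: on any slice $\{Z^i_{0,\ldots,s-1}=z\}$, the piece carried by $\{\sigma \geq t\}$ has its conditional law on the tail of $Z^i$ forced to coincide with $\mu^i(\cdot \mid z)$ by compatibility with the $Z^i$-marginal of $\hmu$.

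For part (b), I verify the two defining properties of a maximal agreement coupling. For the correct marginals, using the marginal property of each $\hmu^{z,z'}_t$,
\[
\hmu'(Z^i \in A) = \erwi{\hmu}{\hmu^{Z^1_{0,\ldots,\sigma},\,Z^2_{0,\ldots,\sigma}}_\sigma(Z^i \in A)} = \erwi{\hmu}{\mu^i(A \mid Z^i_{0,\ldots,\sigma})}.
\]
Applying part (a) on each level set $\{\sigma = t\}$ identifies $\mu^i(A \mid Z^i_{0,\ldots,\sigma})$ as a version of the $\hmu$-conditional expectation of $\ind_A(Z^i)$ given $(Z^i_{0,\ldots,\sigma}, \sigma)$, so the tower property yields $\mu^i(A)$. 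For the maximality property, $\sigma$ is a measurable function of the pair of prefixes $(Z^1_{0,\ldots,\sigma}, Z^2_{0,\ldots,\sigma})$, and these prefixes are preserved under the resampling because $\hmu^{z,z'}_t$ concentrates on paths whose first $t+1$ coordinates are $z$ and $z'$ respectively. Hence the law of $\sigma$ under $\hmu'$ coincides with that under $\hmu$, and $\hmu'(\sigma > t) = \hmu(\sigma > t) = 1 - \norm{\mu^1 - \mu^2}_{\cF_t-\TV}$ for every $t$.

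The hard part will be the disintegration in part (a): the maximality condition controls only the aggregate probability $\hmu(\sigma \geq t+1)$, so extracting the pointwise conditional requires combining the diagonal identity with the marginal constraint via the regular conditional probabilities provided by the Polish assumption on $E$, taking care on slices of small $\hmu$-mass. Once part (a) is in hand, part (b) is largely bookkeeping for the stopping-time-indexed conditioning.
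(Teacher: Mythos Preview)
Your outline for part (a) identifies the same starting point as the paper: for any maximal agreement coupling, the sub-probability measure $A\mapsto\hmu(Z^i_{0,\ldots,t}\in A,\sigma>t)$ on $\cF_t$ is forced to equal $\pi_t:=(\mu^1\wedge\mu^2)|_{\cF_t}$, by the argument $\pi'_t\leq\mu^1|_{\cF_t}\wedge\mu^2|_{\cF_t}$ together with $|\pi'_t|=|\pi_t|$. Where your sketch is too optimistic is the ``compatibility with the $Z^i$-marginal'' step: knowing $\omu^i_t:=\hmu(Z^i\in\cdot,\sigma\geq t)\leq\mu^i$ and $\omu^i_t|_{\cF_{t-1}}=\pi_{t-1}$ at a \emph{single} level does not by itself pin down $\omu^i_t(\cdot\mid z)$ for $z\in E^s$ with $s\geq t$ --- a sub-probability $\nu\leq\mu^i$ with prescribed $\cF_{t-1}$-restriction can have many different disintegrations along finer $\sigma$-algebras. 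The paper closes this gap with an induction on $t$ (Lemma~\ref{lemma:omu-rewrite}): from $\omu^i_{t+1}|_{\cF_t}=\pi_t$ one gets $\omu^i_{t+1}(\cdot)=\int\omu^i_{t+1}(\cdot\mid z)\,\pi_t(dz)$, and then one argues $\omu^i_{t+1}(\cdot\mid z)=\omu^i_t(\cdot\mid z)$ for $\pi_t$-a.e.\ $z\in E^{t+1}$, which feeds the inductive hypothesis $\omu^i_t(\cdot\mid z)=\mu^i(\cdot\mid z)$. So the ``hard part'' you anticipate is handled by exploiting the identity $\omu^i_{t'}|_{\cF_{t'-1}}=\pi_{t'-1}$ \emph{simultaneously for all $t'\geq t$}, threaded through an induction, rather than by a one-shot disintegration against the marginal.

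For part (b) your route genuinely differs from the paper's. The paper goes back to the explicit construction of Theorem~\ref{thm:maximal-agreement-coupling} and replaces the product measure $\mu^1_t(\cdot\mid z)\otimes\mu^2_t(\cdot\mid z)$ inside $\hmu_t$ by the prescribed coupling $\hmu^{z,z'}_t$, then invokes Lemma~\ref{lemma:omu-rewrite} to see the marginals are unchanged. Your argument instead works directly with the integral $\hmu'=\int\hmu^{Z^1_{0,\ldots,\sigma},Z^2_{0,\ldots,\sigma}}_\sigma\,d\hmu$: part (a) plus the tower property gives the correct marginals, and preservation of the prefixes $(Z^1_{0,\ldots,\sigma},Z^2_{0,\ldots,\sigma})$ under the resampling gives $\hmu'(\sigma>t)=\hmu(\sigma>t)$. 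This is cleaner and more faithful to the theorem's hypothesis that $\hmu$ is an arbitrary maximal agreement coupling; the paper's version is tied to its particular construction. The only thing to be careful about in your argument is that $\hmu^{z,z'}_t$ is a coupling of $\mu^1(\cdot\mid z)$ and $\mu^2(\cdot\mid z')$, hence concentrates on paths with the prescribed first $t{+}1$ coordinates --- you state this, and it is what makes both the marginal and the $\sigma$-preservation computations go through.
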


It is clear that the event $\{\sigma=t\}$ contains information about $Z^1$ and $Z^2$. This is unavoidable, but also undesirable. In particular properties of the first disagreement time $\sigma$ cannot assumed to be stable under conditioning: $\hmu(\sigma=\infty)=\inf_{t\in\bN}\hmu(\sigma\geq t)$ might be positive, but $\hmu(\sigma\geq t | Z^1\in A_t)\to0$ for a decreasing sequence of events $A_t$, $A_t\in \cF_t$.

The second main result of this article is a remedy to this problem. There exists a lower bound $\tau$ on $\sigma$ which is \emph{independent} of $Z^1$. With this independence there is no problem in the above example when using $\tau$ instead of $\sigma$.
\begin{theorem}\label{thm:strong-agreement-coupling}
For any maximal agreement coupling $\hmu$ of $\mu^1$ and $\mu^2$ there exists an extension $\hnu$ to $E^\bN\times E^\bN\times(\bN\cup\{\infty\})$ by an additional random variable $\tau\in\bN\cup\{\infty\}$  with the following properties:
\begin{enumerate}
\item $\tau$ is independent of $Z^1$;
\item $\sigma \geq \tau \quad \hnu$-a.s.;
\item $\kappa_t:=\hmu(\tau=t|\tau\geq t)=1-\essinf_{B,z}\left\{\frac{\mu^2(Z_t\in B | Z=z)}{\mu^1(Z_t\in B | Z=z)} \right\},$
 where the infimum is taken over $\mu^1$-a.e. $z\in E^t$ and $B\subset E$ with $\mu^1(Z_t\in B | Z=z)>0$.
\end{enumerate}
In particular, if $\kappa_t<1$ for all $t\in\bN$ and $\sum_{t=0}^\infty \kappa_t<\infty$, then $\hnu(\tau=\infty)>0$.
\end{theorem}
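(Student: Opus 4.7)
The plan is to construct $\tau$ by enlarging the probability space with auxiliary Uniform$[0,1]$ variables $(U_t)_{t\in\bN}$ that are independent of $(Z^1, Z^2)$ under $\hnu$, then using them to ``top up'' the local step-by-step decoupling probability of $\hmu$ so that $\tau$ decouples at rate exactly $\kappa_t$ at time $t$, independently of $Z^1$.

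The key observation is that since $\hmu$ is a maximal agreement coupling, on $\{\sigma\geq t\}$ with common past $z\in E^t$ the joint conditional law of $(Z^1_t, Z^2_t)$ is the maximal coupling of $\mu^1(Z_t\in\cdot|z)$ and $\mu^2(Z_t\in\cdot|z)$. A direct computation with Radon--Nikodym densities gives
\[ p_t(z,y) := \hmu\bigl(Z^2_t \neq y \,\big|\, Z^1_{0,\ldots,t}=(z,y),\,\sigma\geq t\bigr) = \Bigl(1-\tfrac{d\mu^2(\cdot|z)}{d\mu^1(\cdot|z)}(y)\Bigr)_+, \]
and the formula defining $\kappa_t$ is precisely the essential supremum of $p_t(z,y)$ over $(z,y)$, so $p_t(z,y) \leq \kappa_t$ holds $\mu^1$-a.s. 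Set $r_t(z,y) := (\kappa_t - p_t(z,y))/(1 - p_t(z,y)) \in [0,1]$ and define
\[ \tau := \inf\bigl\{t\in\bN \,:\, Z^1_t \neq Z^2_t \text{ or } U_t \leq r_t(Z^1_{0,\ldots,t})\bigr\}. \]

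For (b): whenever $\tau = t$, either $Z^1_t \neq Z^2_t$ (so $\sigma \leq t$, which combined with $\tau \geq t \Rightarrow \sigma \geq t$ gives $\sigma = t$) or the $U_t$-event triggers while $Z^1_{0,\ldots,t} = Z^2_{0,\ldots,t}$, giving $\sigma > t$; in both cases $\sigma \geq \tau$. For (c) and (a) the central identity to establish is, using that $U_t$ is independent of $(Z^1, Z^2, U_0, \ldots, U_{t-1})$,
\[ \hnu(\tau=t \,|\, \tau\geq t,\, Z^1_{0,\ldots,t}) \;=\; p_t + (1-p_t)\,r_t \;=\; \kappa_t. \]
Iterating $\hnu(\tau \geq t+1 \mid Z^1) = (1-\kappa_t)\,\hnu(\tau \geq t \mid Z^1)$ gives the deterministic value $\hnu(\tau \geq t \mid Z^1) = \prod_{s<t}(1-\kappa_s)$, whence $\tau$ is independent of $Z^1$ (property (a)) and $\hnu(\tau = t \mid \tau \geq t) = \kappa_t$ (property (c)).

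The main technical obstacle is upgrading the conditioning in the key identity from the initial segment $Z^1_{0,\ldots,t}$ to the full process $Z^1$: one needs that under $\hmu$ the conditional law of $Z^2_t$ given $\sigma\geq t$ and $Z^1$ depends on $Z^1$ only through $Z^1_{0,\ldots,t}$. This Markov-type property is satisfied by any maximal agreement coupling built step-by-step, and Theorem~\ref{thm:maximal-agreement-marginals}(b) lets us replace an arbitrary $\hmu$ by one with this structure without leaving the class of maximal agreement couplings. For the ``in particular'' clause, independence and (c) give $\hnu(\tau=\infty) = \prod_{t=0}^\infty(1-\kappa_t)$; the assumption $\kappa_t<1$ makes each factor strictly positive, while $\sum_t \kappa_t < \infty$ together with the bound $\log(1-x) \geq -x/(1-x)$ on $[0,1)$ (and the fact that $\kappa_t < 1/2$ for all but finitely many $t$) yields $\sum_t \log(1-\kappa_t) > -\infty$, so the product is strictly positive.
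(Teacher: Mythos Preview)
Your construction is essentially the paper's: your threshold events $\{U_t\le r_t\}$ produce exactly the Bernoulli variables $Y_t$ that the paper introduces in Lemma~\ref{lemma:Y_t}, with the same top-up probability $r_t=(\kappa_t-p_t)/(1-p_t)=1-\frac{1-\kappa_t}{1-\kappa_t(z)}$, and your $\tau=\inf\{t:Y_t=1\}$ agrees with the paper's. So the core idea is right and matches the paper.

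There are, however, two genuine wobbles in your justification. First, the assertion that on $\{\sigma\ge t\}$ the conditional joint law of $(Z^1_t,Z^2_t)$ is \emph{the} maximal coupling is false for a general maximal agreement coupling: after decoupling the joint law is unconstrained. What you actually use is only the diagonal mass $\hmu(Z^1_t=Z^2_t\mid\sigma\ge t,z)$, and that quantity \emph{is} universal because $\pi_t=\hmu(Z^1_{0,\ldots,t}\in\cdot,\sigma>t)$ is the same for every maximal agreement coupling (this is shown in the proof of Theorem~\ref{thm:maximal-agreement-marginals}). From $\pi_t(dz,dy)=\pi_{t-1}(dz)\,[\mu^1(\cdot|z)\wedge\mu^2(\cdot|z)](dy)$ your formula $p_t=(1-\tfrac{d\mu^2}{d\mu^1})_+$ follows, but not via the maximal-coupling claim.

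Second, and more seriously, your proposed remedy for the ``Markov obstacle'' does not prove the theorem as stated. The theorem asks for an extension of the \emph{given} $\hmu$; invoking Theorem~\ref{thm:maximal-agreement-marginals}(b) produces a \emph{different} maximal agreement coupling $\hmu'$, and an extension of $\hmu'$ is not an extension of $\hmu$. The good news is that the obstacle is illusory: Theorem~\ref{thm:maximal-agreement-marginals}(a) applied with $t$ and with $t+1$ shows that both $\hmu(Z^1\in\cdot\mid Z^1_{0,\ldots,t},\sigma\ge t)$ and $\hmu(Z^1\in\cdot\mid Z^1_{0,\ldots,t},\sigma>t)$ equal $\mu^1(\cdot\mid Z^1_{0,\ldots,t})$, whence $\hmu(\sigma>t\mid Z^1,\sigma\ge t)=1-p_t(Z^1_{0,\ldots,t})$ for \emph{every} maximal agreement coupling. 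With this in hand your iteration $\hnu(\tau>t\mid Z^1)=\prod_{s\le t}(1-\kappa_s)$ goes through for the original $\hmu$, and the rest of your argument is fine.
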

In the case that $E$ is countable the following lemma provides convenient bounds on $\kappa_t$:
\begin{lemma}
Assume $E$ is countable. Define for $i=1,2$
\[ \delta^{(i)}_t:=\inf_{t\in\bN, z\in E^{t-1}, e\in E}\left\{\mu^i(Z_t=e|Z=z) : \mu^1(Z_t=e|Z=z)>0 \right\}. \] 
Then $\kappa_t \leq 1-\delta^{(2)}_t$ and
\[ \kappa_t\leq (\delta^{(1)}_t)^{-1}\sup\{\mu^2(Z_t=e|Z=z)-\mu^1(Z_t=e|Z=z) : z\in E^{t-1}, e\in E\}. \]
\end{lemma}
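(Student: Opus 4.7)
\medskip
\textbf{Proof plan.}
The plan is to exploit the countability of $E$ to reduce the essential infimum in the definition of $\kappa_t$ from arbitrary measurable $B\subset E$ to singletons $B=\{e\}$, and then obtain each of the two bounds from an elementary pointwise estimate on the ratios $\mu^2(Z_t=e|Z=z)/\mu^1(Z_t=e|Z=z)$.

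The key step is the singleton reduction. Fix $z$ and $B\subset E$ with $\mu^1(Z_t\in B|Z=z)>0$. I would write
\[
\frac{\mu^2(Z_t\in B|Z=z)}{\mu^1(Z_t\in B|Z=z)}
=\sum_{\substack{e\in B\\ \mu^1(Z_t=e|Z=z)>0}}
\frac{\mu^1(Z_t=e|Z=z)}{\mu^1(Z_t\in B|Z=z)}\cdot\frac{\mu^2(Z_t=e|Z=z)}{\mu^1(Z_t=e|Z=z)},
\]
which exhibits the left-hand ratio as a convex combination of singleton ratios, hence at least the infimum of these. Since the case $B=\{e\}$ gives the reverse inequality, the essential infimum in the definition of $\kappa_t$ coincides with the essential infimum of $\mu^2(Z_t=e|Z=z)/\mu^1(Z_t=e|Z=z)$ taken over $(e,z)$ with $\mu^1(Z_t=e|Z=z)>0$.

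Given this reduction, the first bound follows immediately: since $\mu^1(Z_t=e|Z=z)\le 1$, each singleton ratio is at least $\mu^2(Z_t=e|Z=z)$, and the side-condition $\mu^1(Z_t=e|Z=z)>0$ is precisely the one defining $\delta^{(2)}_t$, which yields $\kappa_t\le 1-\delta^{(2)}_t$. For the second bound I would rewrite
\[
1-\frac{\mu^2(Z_t=e|Z=z)}{\mu^1(Z_t=e|Z=z)}
=\frac{\mu^1(Z_t=e|Z=z)-\mu^2(Z_t=e|Z=z)}{\mu^1(Z_t=e|Z=z)},
\]
bound the denominator below by $\delta^{(1)}_t$ and the numerator above by the supremum on the right-hand side of the stated inequality (with the orientation $\mu^1-\mu^2$, which is the only one compatible with the nonnegativity of $\kappa_t$).

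The only genuine content is the convex-combination identity of the first step; after that, each bound is a two-line estimate and I do not anticipate any serious obstacle.
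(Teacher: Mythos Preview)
The paper does not actually supply a proof of this lemma; it is stated in Section~1 and never revisited, so there is nothing to compare against. Your argument is correct and is precisely the elementary computation the authors presumably have in mind: in the countable case the essential infimum defining $\kappa_t$ reduces to an infimum over singletons, and both bounds then drop out immediately.

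Two small remarks. First, your displayed identity for the ratio over a general $B$ is in fact only an inequality (``$\ge$'') unless $\mu^2(Z_t\in\cdot\mid z)\ll\mu^1(Z_t\in\cdot\mid z)$: atoms $e\in B$ with $\mu^1(Z_t=e\mid z)=0$ but $\mu^2(Z_t=e\mid z)>0$ are dropped from the right-hand side. This does not affect your conclusion, since the weights still sum to~$1$ and you only need the lower bound on the ratio. Second, you are right that the second inequality as printed has the wrong sign: writing
\[
1-\frac{\mu^2(Z_t=e\mid z)}{\mu^1(Z_t=e\mid z)}
=\frac{\mu^1(Z_t=e\mid z)-\mu^2(Z_t=e\mid z)}{\mu^1(Z_t=e\mid z)}
\]
and taking the supremum forces the numerator to appear as $\mu^1-\mu^2$ (or $|\mu^1-\mu^2|$), not $\mu^2-\mu^1$. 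Your correction is the intended statement.
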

We finish with two remarks. First, we address an (impossible) generalization of Theorem \ref{thm:strong-agreement-coupling}. Clearly, in the theorem the roles of $Z^1$ and $Z^2$ can be reversed, so that there is also a r.v. $\tau'$ with $\tau'\leq \sigma$ and $\tau'$ independent of $Z^2$. One might wonder if it is possible to construct a (non-degenerate) time $\tilde{\tau}$ which satisfies $\tilde{\tau}\leq \sigma$ and which is independent of $Z^1$ and independent of $Z^2$ (clearly it cannot be independent of both simultaneously). However, this is not possible, as the following argument shows: Let $f:E\to\bR$, $t\geq 0$. Then
\begin{align}
 \mu^1\left(f(Z_t)\right)-\mu^2\left(f(Z_t)\right) 
&= \hnu\left( f(Z^1_t)-f(Z^2_t) \right)\\
&= \hnu\left( f(Z^1_t)\ind_{\tilde\tau\leq t}-f(Z^2_t)\ind_{\tilde\tau\leq t} \right)	\\
&= \hnu\left( f(Z^1_t)\ind_{\tilde\tau\leq t}\right)-\hnu\left(f(Z^2_t)\ind_{\tilde\tau\leq t} \right)
\end{align}
By the assumed individual independence, this equals
\[ \hnu(\tilde\tau\leq t)\left(\mu^1\left(f(Z_t)\right)-\mu^2\left(f(Z_t)\right)\right), \]
which implies $\tilde\tau=0$ a.s.

For the final remark we consider applying the results to Markov chains. Let $X^1$ and $X^2$ be two Markov chains with the same transition kernel but possibly different starting points $x_1$ and $x_2$ on a Polish space $F$. Clearly a maximal coupling of $X^1$ and $X^2$ is trivial, $\sigma=\infty$ if $x_1=x_2$ and $\sigma=0$ otherwise. However, let $\phi:F\to E$ and consider $Z^i_t=\phi(X^i_t)$, $t\in\bN$, $i=1,2$. For these induced processes a maximal agreement coupling is both meaningful and interesting. For example $\phi$ could be a coarse-graining map or a projection on a lower-dimensional state space.

\section{Preliminaries and the proof of Lemma \ref{lemma:coupling-inequality}}
Before going into the proofs we need some more notation and concepts. We say $\nu$ is a (sub-)probability measures when the total mass $\abs{\nu}$ is less or equal to 1. For two sub-probability measures $\nu^1$ and $\nu^2$, we say $\nu^1\leq \nu^2$ if $\nu^1(A)\leq \nu^2(A)$ for any event $A$, or equivalently $\nu^1\ll \nu^2$ and $\frac{d\nu^1}{d\nu^2}\leq 1$. The minimum $\nu^1\wedge \nu^2$ is the largest sub-probability measure $\nu$ which satisfies $\nu\leq \nu^1$ and $\nu\leq \nu^2$. With $\nu|_{\cF_t}$ we denote the restriction of the measure $\nu$ to the $\sigma$-algebra $\cF_t$. 
For $t\in\bN$, $z\in E^{t+1}$, the regular conditional probability $\nu(\cdot|Z=z)$ of a sub-probability measure $\nu$ is the regular conditional probability of the probability measure $\nu/\abs{\nu}$. A consequence of this convention is that for an event $A$ with $\nu(A)>0$ we have $\nu(\cdot|Z=z,A)=\nu(\cdot, A|Z=z)$.

The proof of the coupling inequality in Lemma \ref{lemma:coupling-inequality} is a simple computation using the minimum of two measures.
\begin{proof}[Proof of Lemma \ref{lemma:coupling-inequality}]
By the maximality of $\nu_t:=\mu^1|_{\cF_t}\wedge \mu^2|_{\cF_t}$ we have that the measures $\mu^1|_{\cF_t}-\nu_t$ and $\mu^2|_{\cF_t}-\nu_t$ are mutually singular, and hence
\[ \norm{\mu^1-\mu^2}_{\cF_t-\TV}=1-\abs{\nu_t}. \]
Furthermore, for $i=1,2$ and any coupling $\hmu$ and $A\in\cF_t$,
\[ \hmu(Z^1_{0,...,t}\in A,\sigma>t)=\hmu(Z^2_{0,...,t}\in A,\sigma>t)\leq \mu^i(A),\]
which by the maximality of $\nu_t$ implies $\hmu(Z^i_{0,...,t}\in\cdot,\sigma>t)\leq \nu_t$, $i=1,2$.
Therefore
\[ \hmu(\sigma>t)\leq \abs{\nu_t} = 1- \norm{\mu^1-\mu^2}_{\cF_t-\TV}. \qedhere \]
\end{proof}

\section{Proofs of Theorems \ref{thm:maximal-agreement-coupling} and \ref{thm:maximal-agreement-marginals}}
The proof of Theorem \ref{thm:maximal-agreement-coupling} is an explicity construction. It uses the same strategy as the proof for the existence of a maximal coupling found in \cite{THORISSON:00}(Theorem 4.6.1). The key difference is that we work with the \emph{increasing} sequence of $\sigma$-algebras $(\cF_t)$. In contrast the construction of the maximal coupling makes use of the decreasing sequence $(\cG_t)$. This difference means an inductive argument from the largest $\sigma$-algebra downwards is not possible.
\begin{proof}[Proof of Theorem \ref{thm:maximal-agreement-coupling}]
We will iteratively define a sequence of sub-probability measures which will allow us to construct the coupling. We start by setting $\overline{\mu}^i_0:= \mu^i$, $i=1,2$ and $\pi_0:=\overline{\mu}^1_0|_{\cF_0}\wedge\overline{\mu}^2_0|_{\cF_0}$, the largest common component of the two measures on the $\sigma$-algebra $\cF_0$. Note that we can interpret $\pi_0$ as a sub-probability measure on $E$. Next we set $\overline{\mu}^i_1(\cdot):= \int_E \overline{\mu}^i_0(\cdot| z)\pi_0(dz)$, which is the extension of $\pi_0$ to a sub-probability measure on $E^\bN$ which satisfies $\overline{\mu}^i_1\leq \overline{\mu}^i_0$.
finally we set $\mu^i_0:=\overline{\mu}^i_0-\overline{\mu}^i_1$, $i=1,2$. Iterating, we define
\begin{align}
&\pi_t:=\overline{\mu}^1_t|_{\cF_t}\wedge\overline{\mu}^2_t|_{\cF_t}, \label{eq:def-pi}\\
&\overline{\mu}^i_{t+1}(\cdot):= \int_{E^{t+1}} \overline{\mu}^i_t(\cdot| z)\pi_t(dz), \label{eq:def-omut}\\
&\mu^i_t:=\overline{\mu}^i_{t}-\overline{\mu}^i_{t+1}. \label{eq:def-mut}
\end{align}
From the construction we immediately obtain that 
\begin{align}
&\mu^i= \overline{\mu}^i_0 \geq \overline{\mu}^i_1\geq ..., &&
\sum_{s=0}^t \mu^i_s= \mu^i-\overline{\mu}^i_{t+1}\leq \mu^i,	\\
&\mu^i_t|_{\cF_s} = \pi_{t-1}|_{\cF_s}-\pi_{t}|_{\cF_s},&&
\norm{\mu^1_t-\mu^2_t}_{\cF_s-\TV}=0,\quad 0\leq s<t.
\end{align}
As a consequence, we can define $\mu^i_\infty := \mu^i-\sum_{s=0}^\infty \mu^i_s\geq 0$. Furthermore, 
\begin{align}
\mu^i_\infty|_{\cF_t}=\left[\mu^i-\sum_{s=0}^t\mu^i_s\right]|_{\cF_t}+\sum_{s=t+1}^\infty \pi_s|_{\cF_t} = \pi_t+\sum_{s=t+1}^\infty \pi_s|_{\cF_t},
\end{align}
which shows that $\mu^1_\infty=\mu^2_\infty$.

To obtain a coupling, let $\hmu_0:=\mu^1_0\otimes \mu^2_0$, and
\begin{align}\label{eq:def-hmut}
\hmu_t := \int_{E^t} \mu^1_t(\cdot|z)\otimes\mu^2_t(\cdot|z)\;\mu^1_t|_{\cF_{t-1}}(dz), \quad 1\leq t\leq \infty,
\end{align}
where for $t=\infty$ we have the degenerate case with $z\in E^\bN$ and $\mu^i_\infty(\cdot|z)=\delta_z$.

Define $\hmu=\hmu_0+\hmu_1+...+\hmu_\infty$, for which a direct computation shows that the marginals are $\mu^1$ and $\mu^2$, hence $\hmu$ is a coupling. What remains to show is that is indeed a maximal agreement coupling.

First we will show that for all $t\in\bN\cup\{\infty\}$,
\begin{align}\label{eq:hmut-sigma-t}
\hmu(\cdot,\sigma=t)=\hmu_t(\cdot),
\end{align}
which is equivalent to $\hmu_t(\sigma\neq t)=0$ for all $t\in\bN\cup\{\infty\}$.
By construction $\hmu_t(\sigma<t)=0$, and
\begin{align}
\mu^1_t|_{\cF_t}\wedge\mu^2_t|_{\cF_t} = \left(\overline{\mu}^1_t|_{\cF_t}-\overline{\mu}^1_{t+1}|_{\cF_t}\right)\wedge\left(\overline{\mu}^2_t|_{\cF_t}-\overline{\mu}^2_{t+1}|_{\cF_t}\right) = \overline{\mu}^1_t|_{\cF_t}\wedge \overline{\mu}^2_t|_{\cF_t} - \pi_t =0.
\end{align}
Therefore $\hmu_t(\sigma\leq t)=|\hmu_t|$, the total mass of $\hmu_t$, and hence $\hmu_t(\sigma>t)=0$. 

With \eqref{eq:hmut-sigma-t} we can now verify that $\hmu$ is indeed a maximal agreement coupling:
\begin{align}\label{eq:TVar-1}
\norm{\mu^1-\mu^2}_{\cF_t-\TV} = 1-\abs{\mu^1|_{\cF_t}\wedge\mu^2|_{\cF_t}}=1-\abs{\overline{\mu}^1_{t+1}}
\end{align}
and by \eqref{eq:coupling-inequality2} and \eqref{eq:TVar-1}
\begin{align}
\norm{\mu^1-\mu^2}_{\cF_t-\TV}\leq \hmu(\sigma\leq t) = \sum_{s=0}^t|\hmu_s| = 1-\abs{\overline{\mu}^1_{t+1}} = \norm{\mu^1-\mu^2}_{\cF_t-\TV},
\end{align}
which shows that \eqref{eq:coupling-inequality2} is an equality for all $t$ and hence $\hmu$ is indeed a maximal agreement coupling.
\end{proof}

The proof of Theorem \ref{thm:maximal-agreement-marginals} is mostly a refinement of the construction of the maximal agreement coupling above.
We first show that various regular conditional probabilities of the building blocks of $\hmu$ can be expressed via $\mu^1$ and $\mu^2$.
\begin{lemma}\label{lemma:omu-rewrite}
In the construction of the maximal agreement coupling of Theorem \ref{thm:maximal-agreement-coupling}, it holds that 
$\overline{\mu}^i_t(\cdot | z ) = \mu^i(\cdot | z )$ for all $s\geq t$, $\omu^i_t|_{\cF_{s-1}}$-a.e. $z\in E^{s}$, and $\mu^i_t(\cdot | z ) = \mu^i(\cdot | z )$ for all $s\geq t$, $\mu^i_t|_{\cF_s}$-a.e. $z\in E^{s+1}$.
\end{lemma}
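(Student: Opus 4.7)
The plan is to prove both statements simultaneously by induction on $t$, relying on one short observation about conditional probabilities under absolute continuity. The observation: if a sub-probability measure $\nu$ has a Radon--Nikodym derivative $g := d\nu/d\rho$ with respect to another sub-probability measure $\rho$ that is measurable with respect to a sub-$\sigma$-algebra $\cG$, then for every $\cG' \supseteq \cG$ the regular conditional probabilities satisfy $\nu(A|\cG') = \rho(A|\cG')$ on a set of full $\nu|_{\cG'}$-measure. This is a one-line verification: for measurable $A$ and $B \in \cG'$,
\[
\nu(A \cap B) = \int_B g\,\ind_A\,d\rho = \int_B g\,\rho(A|\cG')\,d\rho = \int_B \rho(A|\cG')\,d\nu|_{\cG'},
\]
where the middle equality uses that $g\,\ind_B$ is $\cG'$-measurable. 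The paper's convention of normalizing sub-probabilities when forming regular conditional probabilities does not affect this identity, since the normalization cancels on both sides.

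The base case $t = 0$ is immediate from $\omu^i_0 = \mu^i$. For the inductive step, I read \eqref{eq:def-omut} as exhibiting $\omu^i_{t+1}$ as absolutely continuous with respect to $\omu^i_t$ with Radon--Nikodym derivative $h_t(Z_{0,...,t})$, where $h_t := d\pi_t/d(\omu^i_t|_{\cF_t})$; this density is $\cF_t$-measurable. Applying the observation with $\cG = \cF_t$ and $\cG' = \cF_{s-1}$ for $s \geq t+1$ (so that $\cF_{s-1} \supseteq \cF_t$) yields $\omu^i_{t+1}(\cdot|Z_{0,...,s-1}) = \omu^i_t(\cdot|Z_{0,...,s-1})$ $\omu^i_{t+1}|_{\cF_{s-1}}$-a.e., and the inductive hypothesis (applied at level $t$ with $s \geq t+1 > t$) collapses the right-hand side to $\mu^i(\cdot|Z_{0,...,s-1})$.

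The $\mu^i_t$ claim is handled analogously: by \eqref{eq:def-mut}, $\mu^i_t = \omu^i_t - \omu^i_{t+1}$ has $\cF_t$-measurable Radon--Nikodym derivative $1 - h_t(Z_{0,...,t})$ with respect to $\omu^i_t$. For $s \geq t$ we have $\cF_s \supseteq \cF_t$, so the same observation gives $\mu^i_t(\cdot|Z_{0,...,s}) = \omu^i_t(\cdot|Z_{0,...,s})$, which by the first half of the induction already equals $\mu^i(\cdot|Z_{0,...,s})$.

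I do not expect a genuine obstacle; the only mild subtlety is verifying that the Radon--Nikodym derivatives of $\omu^i_{t+1}$ and $\mu^i_t$ with respect to $\omu^i_t$ are indeed $\cF_t$-measurable, but that is immediate from the definitions \eqref{eq:def-omut} and \eqref{eq:def-mut}.
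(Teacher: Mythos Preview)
Your proof is correct and rests on the same underlying idea as the paper's: the sub-probability measures $\omu^i_{t+1}$ and $\mu^i_t$ differ from $\omu^i_t$ only by an $\cF_t$-measurable density, so conditioning on any finer $\sigma$-algebra $\cF_{s-1}$ (resp.\ $\cF_s$) is inherited from $\omu^i_t$, and hence from $\mu^i$ by induction.

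The packaging differs slightly, and yours is arguably tidier. The paper argues in two stages for the first claim: it first reads off $\omu^i_{t+1}(\cdot\mid z)=\omu^i_t(\cdot\mid z)$ for $\pi_t$-a.e.\ $z\in E^{t+1}$ directly from the integral formula \eqref{eq:def-omut}, and then separately extends to longer histories $z'\in E^{s}$ via the tower property of regular conditional probabilities; the second claim is handled by an independent direct computation. Your single Radon--Nikodym observation (that a $\cG$-measurable density forces agreement of conditional laws on every $\cG'\supseteq\cG$) absorbs both the tower extension and the $\mu^i_t$ computation into one stroke, so all $s$ and both claims fall out uniformly. The only point worth making explicit---which you note---is that the null sets match up: $\omu^i_{t+1}|_{\cF_{s-1}}\leq \omu^i_t|_{\cF_{s-1}}$ and $\mu^i_t|_{\cF_s}\leq \omu^i_t|_{\cF_s}$, so the inductive hypothesis, stated $\omu^i_t$-a.e., transfers.
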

\begin{proof}
First we show that $\overline{\mu}^i_t(\cdot | z ) = \mu^i(\cdot | z )$ for $\overline{\mu}^i_t|_{\cF_s}$-a.e. $z\in E^s$, and the proof is done by induction. The claim is clearly true for $t=0$, since $\omu^i_0=\mu^i$. Assume now the claim is true for $t\in\bN$. Let $s\geq t+1$, $z\in E^{t+1}$ and $z'\in E^{s+1}$ with $z'_{0,...,t}=z$. Since $\overline{\mu}^i_{t+1}(\cdot )=\int_{E^{t+1}}\overline{\mu}^i_t(\cdot | \gamma) \pi_t(d\gamma)$ and $z\in E^{t+1}$ we have $\overline{\mu}^i_{t+1}(\cdot | z)= \overline{\mu}^i_{t}(\cdot | z)$ for $\pi_t$-a.e. $z\in E^{t+1}$. Since $\omu^i_{t+1}|_{\cF_t}=\pi_t\leq \omu^i_t|_{\cF_t}$, the induction hypothesis implies 
$\overline{\mu}^i_{t+1}(\cdot | z)= \mu^i(\cdot | z)$ for $\omu^i_{t+1}$-a.e. $z\in E^{t+1}$. To obtain the statement for $z'$ we use the fact that $\mu^i(\cdot|z')$ is a version of the regular conditional probability $\nu_z(\cdot|z')$, where $\nu_z=\mu^i(\cdot|z)$. 

For the second claim, let $s\geq t$, $A\in\cG_{s+1}$ and $B\in\cF_s$. Then, using the definition of $\mu^i_t$ and the first claim,
\begin{align}
&\int_B \mu^i_t(A|z)\mu^i_t|_{\cF_s}(dz) = \mu^i_t(A\cap B) = \omu^i_t(A\cap B) - \omu^i_{t+1}(A\cap B) \\
&= \int_B \mu(A|z)(\omu^i_t-\omu^i_{t+1})|_{\cF_s}(dz) = \int_B \mu^i(A|z)\mu^i_t|_{\cF_s}(dz).	\qedhere
\end{align}
\end{proof}
\begin{proof}[Proof of Theorem \ref{thm:maximal-agreement-marginals}]
Part a): First assume that $\hmu$ is the maximal agreement coupling constructed in Theorem \ref{thm:maximal-agreement-coupling}. 
By \eqref{eq:hmut-sigma-t}, \eqref{eq:def-omut} and Lemma \ref{lemma:omu-rewrite},
\[ \hmu(\cdot | Z^i=z, \sigma\geq t )=\hmu(\cdot, \sigma\geq t | Z^i=z)=\omu_t(\cdot|Z^i=z) = \mu^i_t(\cdot | z) = \mu^i(\cdot|z), \]
which shows the claim for this maximal agreement coupling. Assume now that $\hmu'$ is some other maximal agreement coupling. Define the sub-probability measure $\pi_t'(A):=\hmu'(Z^i\in A, \sigma>t)$, $A\in\cF_t$, $i=1,2$. The definition of $\pi'_t$ does not depend on the choice of $i$ since $\sigma>t$ and $A\in\cF_t$. Therefore $\pi'_t\leq \mu^i$ for $i=1$ and $i=2$, which implies $\pi'_t\leq \pi_t$. But by the maximal agreement property of $\hmu'$, $|\pi'_t|=|\pi_t|$, which implies $\pi'_t=\pi_t$. Defining $\omu^{\prime,i}_t(\cdot):=\hmu'(Z^i\in \cdot, \sigma>t)$ and $\mu^{\prime,i}_t=\omu^{\prime,i}_t-\omu^{\prime,i}_{t+1}$, the proof of Lemma \ref{lemma:omu-rewrite} and the above argument for $\hmu$ are true for $\hmu'$ as well, using only $\pi'_t=\pi_t$.

For part b), in \eqref{eq:def-hmut} we replace $\mu^1_t(\cdot|z)\otimes\mu^2_t(\cdot|z)$ by
\[ \int_{E\times E}\hmu^{(z,\gamma_1),(z,\gamma_2)}_t(Z^1\in\cdot, Z^2\in\cdot) \left[\mu^1_t(Z_t\in\cdot|z)\otimes\mu^2_t(Z_t\in\cdot|z)\right](d(\gamma_1,\gamma_2)).
\]
By Lemma \ref{lemma:omu-rewrite} the marginals stay the same, so we obtain a valid coupling of $\mu^1$ and $\mu^2$. And since the change affects only the evolution after the decoupling time, the maximal agreement property remains unaffected.
\end{proof}

\section{Proof of Theorem \ref{thm:strong-agreement-coupling}}
This proof relies on a refinement of the construction of the maximal agreement coupling in the previous section. The next lemma is the key ingredient. Basically, it is the analogous statement of Theorem \ref{thm:strong-agreement-coupling} for a single time point $t$.
\begin{lemma}\label{lemma:Y_t}
Fix $t\in\bN$. A maximal agreement coupling $\hmu$ of $\mu^1$ and $\mu^2$ can be extended to a coupling $\hmu^{Y_t}$ on $E^\bN\times E^\bN \times \{0,1\}$ containing an additional random variable $Y_t\in\{0,1\}$ with the following properties:
\begin{enumerate}
\item $\hmu^{Y_t}(Y_t=1)=\kappa_t$, where $\kappa_t$ is as in Theorem \ref{thm:strong-agreement-coupling};
\item $Y_t$ is independent of $Z^1$ and $\{\sigma>t-1\}$;
\item $\{\sigma=t\}\subset \{\sigma>t-1,Y_t=1\}$.
\end{enumerate}
\end{lemma}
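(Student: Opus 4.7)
The plan is to extend $\hmu$ by specifying the conditional law of $Y_t$ given $(Z^1, Z^2)$ through three cases, depending on whether $\sigma \le t - 1$, $\sigma = t$, or $\sigma \ge t + 1$. Property (c) will be built in by forcing $Y_t = 1$ on $\{\sigma = t\}$; the other two cases will be governed by independent Bernoulli flips whose parameters I must choose so as to make $Y_t$ pass the independence test in (b) while still having total mass $\kappa_t$ as required by (a).

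The central estimate making this possible is
\[ p(z^1) := \hmu(\sigma = t \mid Z^1 = z^1,\sigma \ge t) \le \kappa_t \quad \text{for } \hmu(\cdot\mid\sigma\ge t)\text{-a.e.\ }z^1, \]
and establishing it is the step I expect to be the main obstacle. To prove it, I would first observe that the argument behind Theorem \ref{thm:maximal-agreement-marginals}(a) shows that, for any MAC $\hmu$, the sub-probability measures $\hmu(\sigma \ge s, Z^1 \in \cdot)$ coincide with the $\omu^1_s$ built in the proof of Theorem \ref{thm:maximal-agreement-coupling}. Hence $1 - p(z^1) = d\omu^1_{t+1}/d\omu^1_t(z^1)$, and by the explicit formulas from that construction together with Lemma \ref{lemma:omu-rewrite}, this derivative reduces on $\cF_t$ to $d(\mu^1 \wedge \mu^2)(Z_t \in \cdot \mid z)/d\mu^1(Z_t \in \cdot \mid z)(z^1_t)$ with $z = z^1_{0,\ldots,t-1}$. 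The defining essinf of $\kappa_t$ is equivalent to the pointwise measure inequality $\mu^2(\cdot \mid z) \ge (1 - \kappa_t)\mu^1(\cdot \mid z)$ for $\mu^1$-a.e.\ $z \in E^t$, which passes to the wedge measure and yields the desired bound on $p(z^1)$.

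With the bound in hand, I set $q(z^1) := (\kappa_t - p(z^1))/(1 - p(z^1)) \in [0,1]$ (with an arbitrary choice on $\{p = 1\}$, which is negligible when $\kappa_t < 1$) and define $\hmu^{Y_t}$ by letting $Y_t$, conditionally on $(Z^1, Z^2)$, be an independent Bernoulli of parameter $\kappa_t$ on $\{\sigma \le t - 1\}$, the constant $1$ on $\{\sigma = t\}$, and an independent Bernoulli of parameter $q(z^1)$ on $\{\sigma \ge t + 1\}$. A direct computation then gives $\hmu^{Y_t}(Y_t = 1 \mid Z^1 = z^1, \sigma \ge t) = p(z^1) + (1 - p(z^1))q(z^1) = \kappa_t$ and $\hmu^{Y_t}(Y_t = 1 \mid Z^1 = z^1, \sigma \le t - 1) = \kappa_t$, so $Y_t$ is jointly independent of $(Z^1, \ind_{\sigma > t-1})$ with $\hmu^{Y_t}(Y_t = 1) = \kappa_t$, and properties (a)--(c) follow.
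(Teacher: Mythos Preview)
Your proposal is correct and follows essentially the same approach as the paper: the three-case conditional Bernoulli construction is identical (your $q(z^1)=(\kappa_t-p(z^1))/(1-p(z^1))$ is exactly the paper's $1-\tfrac{1-\kappa_t}{1-\kappa_t(z)}$), and the key inequality $p(z^1)\le\kappa_t$ is the same estimate the paper proves. Your pointwise derivation of that bound via $\mu^2(\cdot\mid z)\ge(1-\kappa_t)\mu^1(\cdot\mid z)\Rightarrow (\mu^1\wedge\mu^2)(\cdot\mid z)\ge(1-\kappa_t)\mu^1(\cdot\mid z)$ is in fact a slight streamlining of the paper's set-based computation using the identity $a=(a\wedge1)(a\vee1)$.
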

\begin{proof}
Assume that $\kappa_t\in(0,1)$, otherwise the statement is trivial. Furthermore assume for now that $\hmu$ is the maximal agreement coupling constructed in the proof of Theorem \ref{thm:maximal-agreement-coupling}.
For $A\subset E^t$ and $B\subset E$, we write
\[ \kappa_t(A,B) := \hmu(\sigma=t|Z^1_t\in B, Z^1_{0,...,t-1}\in A, \sigma\geq t). \]
Since $\hmu(Z^1\in\cdot, \sigma=s)=\mu^1_s(\cdot)$ and $\mu^1_t=\omu^1_t-\omu^1_{t+1}$, we have
\begin{align}
\kappa_t(A,B) &= \frac{\hmu(\sigma=t,Z^1_t\in B, Z^1_{0,...,t-1}\in A)}{\hmu(\sigma\geq t,Z^1_t\in B, Z^1_{0,...,t-1}\in A)} = \frac{\mu^1_t(Z_t\in B, Z_{0,...,t-1}\in A)}{\omu^1_t(Z_t\in B, Z_{0,...,t-1}\in A)}\\
&= 1-\frac{\omu^1_{t+1}(Z_t\in B, Z_{0,...,t-1}\in A)}{\omu^1_t(Z_t\in B, Z_{0,...,t-1}\in A)}. \label{eq:kappa-4}
\end{align}
We want to show that $\kappa_t(A,B) \leq \kappa_t$. To this end, by \eqref{eq:def-omut} and Lemma \ref{lemma:omu-rewrite},
\begin{align}
&\omu^1_t(Z_t\in B, Z_{0,...,t-1}\in A) 
= \int_A \int_B 1\ \mu^1(Z_t\in dy|z)\pi_{t-1}(dz)	\\
&= \int_A \int_B \frac{d\mu^1(Z_t\in \cdot|z)}{d\mu^2(Z_t\in \cdot|z)}(y) \mu^2(Z_t\in dy|z)\pi_{t-1}(dz),
\end{align}
where we used in the last line that $\mu^1(Z_t\in \cdot|z)\ll\mu^2(Z_t\in \cdot|z)$ (for a.e. $z$) since $\kappa_t<1$.
By using the fact that for any $a\in\bR$, $a=(a\wedge 1)(a\vee 1)$, we can upper bound the above by
\begin{align}
&\esssup_{z\in A, y\in B} \left(\frac{d\mu^1(Z_t\in \cdot|z)}{d\mu^2(Z_t\in \cdot|z)}(y)\vee 1\right) \int_A \int_B \frac{d\mu^1(Z_t\in \cdot|z)}{d\mu^2(Z_t\in \cdot|z)}(y)\wedge 1\; \mu^2(Z_t\in dy|z)\pi_{t-1}(dz)	\\
&= \esssup_{z\in A, y\in B} \left(\frac{d\mu^1(Z_t\in \cdot|z)}{d\mu^2(Z_t\in \cdot|z)}(y)\right) \int_A \int_B \left[\mu^1(Z_t\in \cdot|z)|_{\cF_t}\wedge \mu^2(Z_t\in\cdot|z)|_{\cF_t}\right](dy) \pi_{t-1}(dz)	\\
&\leq (1-\kappa_t)^{-1} \pi_t(Z_t\in B, Z_{0,...,t-1}\in A) = (1-\kappa_t)^{-1} \omu^1_{t+1}(Z_t\in B, Z_{0,...,t-1}\in A),
\end{align}
where in the last line we used \eqref{eq:def-pi} and \eqref{eq:def-omut}.
It follows that \eqref{eq:kappa-4} is indeed less or equal to $\kappa_t$.
Define now for $z\in E^{t+1}$ $\kappa_t(z):=\hmu(\sigma=t|Z^1=z, \sigma\geq t)$. Since $\kappa_t(A,B)\leq \kappa_t$ for all $A,B$ we have also that $\kappa_t(z)\leq \kappa_t$ for $\omu^1_{t}$-a.e. $z\in E^{t+1}$.

We can define the extended coupling $\hmu^{Y_t}$ on $E^\bN\times E^\bN\times\{0,1\}$ via $\hmu^{Y_t}_s=\hmu_s\otimes(\kappa_t\delta_1+(1-\kappa_t)\delta_0)$, $s<t$, $\hmu^{Y_t}_t= \hmu_t\otimes \delta_1$ and
\begin{align}
\hmu^{Y_t}_s=\int_{E^{s}}\hmu_s(\cdot|Z^1=Z^2=z)\otimes\left(\left(1-\frac{1-\kappa_t}{1-\kappa_t(z_{0,...,t})}\right)\delta_1+\frac{1-\kappa_t}{1-\kappa_t(z_{0,...,t})}\delta_0\right)\mu^1_s|_{\cF_{s-1}}(dz)
\end{align}
for $s>t$, and we set $\hmu^{Y_t}=\hmu^{Y_t}_0+...+\hmu^{Y_t}_\infty$.

What remains is to verify that properties a), b) and c) hold.
Property c) follows from $\hmu^{Y_t}(\cdot, \sigma=s)=\hmu^{Y_t}_s$ and the definition of $\hmu^{Y_t}_t$. For a) and b), let $A\in\cF_t$. By the construction of $\hmu^{Y_t}$,
\begin{align}
&\hmu^{Y_t}(Y_t=1,Z^1\in A,\sigma\geq t) = \hmu^{Y_t}_t(Y_t=1,Z^1\in A)+...+\hmu^{Y_t}_\infty(Y_t=1,Z^1\in A)\\
&= \mu^1_t(Z^1\in A) + \int_A\left(1-\frac{1-\kappa_t}{1-\kappa_t(z)}\right)\omu^1_{t+1}|_{\cF_t}(dz).	\label{eq:kappa-2}
\end{align}
By \eqref{eq:kappa-4} and $\kappa_t<1$, 
\[ (1-\kappa_t(A,B))^{-1}=\frac{\omu^1_t(Z_t \in B, Z_{0,...,t-1}\in A)}{\omu^1_{t+1}(Z_t \in B, Z_{0,...,t-1}\in A)}<\infty, \]
from which follows that $\omu^1_t|_{\cF_t} \ll \omu^1_{t+1}|_{\cF_t}$ and 
\begin{align}\label{eq:density}
\frac{d\omu^1_t|_{\cF_t}}{d\omu^1_{t+1}|_{\cF_t}}(z)=(1-\kappa_t(z))^{-1}.
\end{align}
Together with \eqref{eq:kappa-2} we obtain
\begin{align}
\hmu^{Y_t}(Y_t=1 ,Z^1\in A, \sigma\geq t ) 
&= \mu^1_t(A) + \omu^1_{t+1}(A)-(1-\kappa_t)\omu^1_t(A) \\
&= \kappa_t\omu^1_t(A)\\
&= \kappa_t\hmu^{Y_t}(Z^1\in A,\sigma\geq t). \label{eq:kappa-3}
\end{align}
This shows both that $\hmu^{Y_t}(Y_t=1)=\kappa_t$ and independence of $Z^1_{0,...,t}$ and $\{\sigma \geq t\}$. 
To obtain the full independence of $Z^1$, let $B\in\sigma(Z^1_{t+1},...,Z^1_s)$ for $s>t$ arbitrary. Then, by \eqref{eq:def-hmut} and Lemma \ref{lemma:omu-rewrite}, \eqref{eq:kappa-2} changes to
\begin{align}
&\hmu^{Y_t}(Y_t=1,Z^1\in A\cap B,\sigma\geq t)  \\
&= \mu^1_t(A\cap B) + \int_A\left(1-\frac{1-\kappa_t}{1-\kappa_t(z)}\right)\mu^1(B|z)\omu^1_{t+1}|_{\cF_t}(dz).	
\end{align}
With the same computation as in \eqref{eq:kappa-3} we get
\[ \hmu^{Y_t}(Y_t=1,Z^1\in A\cap B,\sigma\geq t) =\kappa_t\hmu^{Y_t}(Z^1\in A\cap B,\sigma\geq t), \]
which completes the proof for $\hmu$.

To show the statement for a general maximal agreement coupling $\hmu$ we use the same strategy as in the proof of Theorem \ref{thm:maximal-agreement-marginals}. We define $\pi_t$, $\omu^i_t$ and $\mu^i_t$ in terms of $\hmu$: 
\begin{align}\label{eq:hmu-general}
\begin{aligned}
\pi_t&:=\hmu(Z^1\in\cdot, \sigma>t)|_{\cF_t},	\\
\mu^i_t&:=\hmu_t(Z^i\in\cdot)	,
\end{aligned}
\qquad
\begin{aligned}
\hmu_t&:=\hmu(\cdot, \sigma=t)	,\\
\omu^i_t&:=\hmu_t(Z^i\in \cdot, \sigma\geq t).
\end{aligned}
\end{align}
We restate that $\pi_t$ is universal in maximal agreement couplings, as was shown in the proof of Theorem \ref{thm:maximal-agreement-marginals}. Using this the above construction of $\hmu^{Y_t}$ follows through the same.
\end{proof}
Theorem \ref{thm:strong-agreement-coupling} is a generalization of Lemma \ref{lemma:Y_t}, and the proof reflects this.
\begin{proof}[Proof of Theorem \ref{thm:strong-agreement-coupling}]
 We will introduce random variables $(Y_t)_{t\in\bN}$ in such a way that the law of $(Z^1,Z^2,Y_t)$ is given by the coupling $\hmu^{Y_t}$ constructed in Lemma \ref{lemma:Y_t}. We do this by using the way $\hmu_s$ is extended to $\hmu^{Y_t}_s$ simultaneously for all $Y_t$. For $s,t\in\bN$ and $z\in E^{s+1}$ let
\begin{align}
\nu_{s,t}(z):=
\begin{cases}
\kappa_t\delta_1+(1-\kappa_t)\delta_0, &s<t;\\
\delta_1, &s=t;\\
\left(1-\frac{1-\kappa_t}{1-\kappa_t(z_{0,...,t})}\right)\delta_1+\frac{1-\kappa_t}{1-\kappa_t(z_{0,...,t})}\delta_0, &s>t.
\end{cases}
\end{align} 
Note that $\nu_{s,t}(z)$ is the distribution of $Y_t$ given $\{\sigma=s\}$ and $Z^1=z$.
By simply taking the product measures we obtain a coupling $\hnu=\hnu_0+...+\hnu_\infty$,
\begin{align} 
\hnu_s = \int_{E^{s+1}}\hmu_s(\cdot|Z^1=z)\otimes\bigotimes_{t=0}^\infty \nu_{s,t}(z)\ \mu^1_s|_{\cF_s}(dz),
\end{align}
where $\mu^1_s$ and $\hmu_s$ are given by \eqref{eq:hmu-general}.
This construction indeed extends the maximal agreement coupling $\hmu$ by a sequence $(Y_t)_{t\in\bN}$ and the marginal of $(Z^1,Z^2,Y_t)$ is given by $\hmu^{Y_t}$.

Let $\tau:=\inf\{t\geq 0 : Y_t=1\}$. By construction, $\hnu_t(Y_t=1)=1$. This implies $Y_\sigma=1$ and hence $\tau\leq \sigma$ $\hnu$-a.s. Furthermore we get $\hnu_s(\tau=t)=0$ for all $t>s$.

Let $A\subset E^\bN$ be an arbitrary event. We have
\begin{align}
&\hnu(Z^1\in A, \tau>t)= \hnu(Z^1\in A, Y_{t}=0, \tau>t-1 ) \\
&= (\hnu_{t+1}+...+\hnu_\infty)(Z^1\in A, Y_{t}=...=Y_0=0 ).
\end{align}
For $r> t$, 
\begin{align}
&\hnu_r(Z^1\in A, Y_{t}=...=Y_0=0 ) \\
&= \int_{E^{r+1}}\hmu_r(Z^1\in A|Z^1=z)\otimes\bigotimes_{s=0}^{t} \left[\nu_{r,s}(z)\right](Y_s=0)\mu^1_r|_{\cF_r}(dz) \\
&= \int_{E^{r+1}}\mu^1_r(A|z)\prod_{s=0}^{t}\frac{1-\kappa_s}{1-\kappa_s(z_{0,...,s})}\mu^1_r|_{\cF_r}(dz).
\end{align}
By Lemma \ref{lemma:omu-rewrite}, $\mu^1_r(A|z)=\mu^1(A|z)$. 
Summing over $r> t$, we get
\begin{align}
&\hnu(Z^1\in A, \tau>t)=
\left(\prod_{s=0}^{t} (1-\kappa_s)\right) \int_{\mathrlap{E^{t+1}}}\ \mu^1(A|z)\prod_{s=0}^{t}\frac{1}{1-\kappa_s(z_{0,...,s})}\omu^1_{t+1}|_{\cF_{t}}(dz).
\end{align}
By \eqref{eq:density}, $(1-\kappa_{t}(z_{0,...,t}))^{-1}=\frac{d\omu^1_{t}|_{\cF_{t}}}{d\omu_{t+1}|_{\cF_{t}}}$. Together with Lemma \ref{lemma:omu-rewrite} this allows us to simplify the integral to
\[ \int_{\mathrlap{E^{t}}}\ \mu^1(A|z)\prod_{s=0}^{t-1}\frac{1}{1-\kappa_s(z_{0,...,s})}\omu^1_{t}|_{\cF_{t-1}}(dz).  \]
Repeating the argument shows that it in fact equals $\int_E\mu^1(A|z)\omu^1_0|_{\cF_0}(dz)=\mu^1(A)$, which shows that
\begin{align}
\hnu(Z^1\in A, \tau>t)
&= \left(\prod_{s=0}^{t} (1-\kappa_s)\right)\mu^1(A)
= \hnu(\tau > t)\hnu(Z^1\in A).
\end{align}
\end{proof}

\bibliography{BibCollection}{}
\bibliographystyle{plain}

\end{document}